\documentclass[12pt,a4paper,reqno,twoside]{amsart}

\usepackage[english]{babel}
\usepackage{stmaryrd}
\usepackage{dsfont}
\usepackage[symbol*,ragged]{footmisc}
\usepackage[colorlinks,linkcolor=red,anchorcolor=blue,citecolor=blue,urlcolor=blue]{hyperref}
\usepackage{color,xcolor}

\usepackage{geometry}
\usepackage{amssymb}
\usepackage{amsmath}
\usepackage{mathrsfs}
\usepackage{amsfonts}
\usepackage{epsfig}

\usepackage{amsthm}
\usepackage{amsxtra}
\usepackage{bbding}
\usepackage{epsfig}
\usepackage{graphicx}
\usepackage{latexsym}
\usepackage{mathbbol}
\usepackage{bbold}

\usepackage{pifont}
\usepackage{wasysym}
\usepackage{skull}
\usepackage{float}

\DeclareSymbolFontAlphabet{\mathbb}{AMSb}
\DeclareSymbolFontAlphabet{\mathbbol}{bbold}

\usepackage{amscd}
\usepackage[all]{xy}
\allowdisplaybreaks[4]
\usepackage{setspace}

\geometry{left=2cm,right=2cm,top=2cm,bottom=2cm}

\theoremstyle{plain}
\newtheorem{theorem}{\normalfont\scshape Theorem}[section]
\newtheorem{proposition}{\normalfont\scshape Proposition}[section]
\newtheorem{lemma}[proposition]{\normalfont\scshape Lemma}

\newtheorem*{corollary*}{\normalfont\scshape Corollary}

\theoremstyle{remark}
\newtheorem*{remark*}{\normalfont\scshape Remark}
\newtheorem*{notation}{\normalfont\scshape Notation}

\numberwithin{equation}{section}
\addtocounter{footnote}{1}

\renewcommand{\footnoterule}{
  \kern -3pt
  \hrule width 2.5in height 0.4pt
  \kern 3pt
}

\makeatletter
\@ifundefined{MakeUppercase}{}{}
\makeatother

\begin{document}
	
\title[Waring--Goldbach problem for one square and seventeen fifth powers of primes]
	  {On Waring--Goldbach problem for one square and seventeen fifth powers of primes}

\author[Min Zhang, Jinjiang Li, Fei Xue]{Min Zhang \quad \& \quad Jinjiang Li \quad \& \quad Fei Xue}

\address{School of Applied Science, Beijing Information Science and Technology University,
		 Beijing 100192, People's Republic of China}

\email{min.zhang.math@gmail.com}

\address{(Corresponding author) Department of Mathematics, China University of Mining and Technology,
         Beijing, 100083, People's Republic of China  }

\email{jinjiang.li.math@gmail.com}

\address{Department of Mathematics, China University of Mining and Technology,
         Beijing, 100083, People's Republic of China  }

\email{fei.xue.math@gmail.com}

\date{}

\footnotetext[1]{Jinjiang Li is the corresponding author. \\
  \quad\,\,
{\textbf{Keywords}}: Waring--Goldbach problem; sums of mixed powers; prime variable  \\

\quad\,\,
{\textbf{MR(2020) Subject Classification}}: 11P05, 11P32

}

\begin{abstract}
In this paper, it is established that every sufficiently large positive integer $n$ subject to $n\equiv0\pmod2$ can be represented as a sum of one square of prime and seventeen fifth powers of primes, which gives an enhancement upon the previous result of Br\"{u}dern and Kawada \cite{Brudern-Kawada-2011}.
\end{abstract}

\maketitle

\section{Introduction and main result}
It is very likely that, for each $s>1$, every sufficiently large integer can be represented
as the sum of one square and $s$ $k$--th powers of positive integers with $k\geqslant3$. To be specific, we shall be concerned with the Diophantine equation
\begin{equation}\label{ini-pro}
N=x^2+y_1^k+y_2^k+\cdots+y_s^k,
\end{equation}
where $s$ and $k$ are natural numbers, $k\geqslant 3$. This family of equations belongs to the small stock of variants of Waring's problem that have been studied by various writers since the early days of the Hardy--Littlewood method. A purely heuristical application of that method, based on a major arc analysis only, suggests that the number $R_{k,s}(n)$ of solutions to (\ref{ini-pro}) in natural numbers $x,y_1,\dots,y_s$ satisfies the asymptotic relation
\begin{equation}\label{asymp-dio}
R_{k,s}(n)=\frac{\Gamma(\frac{3}{2})\Gamma^s(1+\frac{1}{k})}{\Gamma(\frac{1}{2}+\frac{s}{k})}\mathfrak{S}_{k,s}(N)
N^{\frac{s}{k}-\frac{1}{2}}(1+o(1))
\end{equation}
as $n$ tends to infinity, provided only that $s>\frac{1}{2}k$. Here the singular series is defined by
\begin{equation*}
\mathfrak{S}_{k,s}(N)=\sum_{q=1}^\infty\frac{1}{q^{s+1}}\sum_{\substack{a=1\\ (a,q)=1}}^q
\Bigg(\sum_{x=1}^qe\bigg(\frac{ax^2}{q}\bigg)\Bigg)  \Bigg(\sum_{y=1}^qe\bigg(\frac{ay^k}{q}\bigg)\Bigg)^s
e\bigg(-\frac{aN}{q}\bigg).
\end{equation*}
A first analysis of the problem was made by Stanley \cite{Stanley-1930} in 1930. Following the pattern laid down by Hardy and Littlewood \cite{Hardy-Littlewood-1920,Hardy-Littlewood-1925} in their classic series Partitio Numerorum, Stanley \cite{Stanley-1930} established the asymptotic formula (\ref{asymp-dio}) for $s\geqslant s_1(k)$, where
\begin{equation}\label{Stanley-bound}
s_1(3)=7,\quad s_1(4)=14,\quad s_1(5)=28,\quad s_1(k)=2^{k-2}\bigg(\frac{1}{2}k-1\bigg)+O(k)\qquad (k>5).
\end{equation}

Later, Sinnadurai \cite{Sinnadurai-1965} verified (\ref{asymp-dio}) for $R_{3,6}(N)$, and Hooley \cite{Hooley-1981} gave a different proof for this result. When $k\geqslant4$, however, the authors are not aware of any improvements of Stanley's bounds (\ref{Stanley-bound}) recorded in the literature. Yet, since the 1920s, the theory of Waring's problem has experienced waves of innovation, resulting in significantly smaller lower bounds for $s$ for which (\ref{asymp-dio}) can be demonstrated. The current state of the art is that when $k\geqslant3$ and
$s\geqslant2^{k-1}+2$, then for any $\varepsilon>0$, one has
\begin{equation}\label{asymp-dio-error-modify}
R_{k,s}(n)=\frac{\Gamma(\frac{3}{2})\Gamma^s(1+\frac{1}{k})}{\Gamma(\frac{1}{2}+\frac{s}{k})}\mathfrak{S}_{k,s}(N)
N^{\frac{s}{k}-\frac{1}{2}}+O\Big(N^{\frac{s}{k}-\frac{1}{2}-\frac{1}{k\cdot2^{k-1}}+\varepsilon}\Big).
\end{equation}
Moreover, there exists a function $s_0(k)$ satisfying
\begin{equation*}
 s_0(k)\leqslant\frac{1}{2}k^2\log k+O(k^2\log\log k),
\end{equation*}
and such that (\ref{asymp-dio-error-modify}) holds whenever $s\geqslant s_0(k)$. Note that when $k=3$ we may take
$s=6$, so the formula for $R_{3,6}(n)$ that was already established by Sinnadurai \cite{Sinnadurai-1965} and Hooley \cite{Hooley-1981}, is included in (\ref{asymp-dio-error-modify}). A mild benefit is the explicit exponent in the error term in (\ref{asymp-dio-error-modify}). Connaisseurs will notice that the saving in
(\ref{asymp-dio-error-modify}) over the main term is exactly the saving provided by Weyl's inequality. In such an instance, the method only just fails with one $k$--th power removed from the representation problem. Thus, one would hope to handle the case $s=2^{k-1}+1$ by a further refinement of the basic method. When $k=3$, Watson \cite{Watson-1972} showed the solvability with $s=5$. However, the method of Watson \cite{Watson-1972} only gives a weak estimation for the number of representations. In 1986, Vaughan \cite{Vaughan-1986} enhanced Watson's lower bound to the expected order of magnitude and obtained $R_{3,5}(n)\gg n^{7/6}$. Based on Vaughan's result, it is reasonable to conjecture that every sufficiently large even integer can be represented as a sum of one square of prime and five cubes of primes. However, this conjecture is perhaps out of reach at present. The hitherto best approximation in this direction is due to Li and Zhang \cite{Li-Zhang-2018}, who showed that (\ref{ini-pro}) is solvable for $k=3,s=5$ with $y_1,\dots,y_5$ being primes and $x$ being an almost--prime $P_6$.

For $k=5$, Br\"{u}dern and Kawada \cite{Brudern-Kawada-2011} established the asymptotic formula for $s=17$. To be specific, they showed that 
\begin{equation*}
 R_{5,17}(n)=\frac{\Gamma(\frac{3}{2})\Gamma^{17}(\frac{6}{5})}{\Gamma(\frac{39}{10})}\mathfrak{S}_{5,17}(n)
 n^{\frac{29}{10}}+O(n^{\frac{29}{10}-\delta})
\end{equation*}
holds for some $\delta>0$. In view of the result of Br\"{u}dern and Kawada \cite{Brudern-Kawada-2011}, it is reasonable to conjecture that, for every sufficiently large integer $n$ subject to $n\equiv0\pmod2$, the following equation
\begin{equation*}
 n=p^2+p_1^5+p_2^5+\cdots+p_{17}^5
\end{equation*}
is solvable in prime variables $p,p_1,p_2,\dots,p_{17}$. In this paper, we shall claim that this conjecture is true
and establish the following result.

\begin{theorem}\label{Theorem}
  Every sufficiently large positive integer $n$ subject to $n\equiv0\pmod2$ can be represented as a sum of one square of prime and seventeen fifth powers of primes.
\end{theorem}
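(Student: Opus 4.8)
The plan is to attack Theorem~\ref{Theorem} by the Hardy--Littlewood circle method in its Waring--Goldbach incarnation. Writing $P=n^{1/5}$, $Q=n^{1/2}$ and $\mathcal{L}=\log n$, I would introduce the prime exponential sums
\[
f(\alpha)=\sum_{p\le P}(\log p)\,e(\alpha p^{5}),\qquad g(\alpha)=\sum_{p\le Q}(\log p)\,e(\alpha p^{2}).
\]
The weighted number of representations is then
\[
\mathcal{R}(n)=\sum_{\substack{p^{2}+p_{1}^{5}+\cdots+p_{17}^{5}=n}}(\log p)\prod_{j=1}^{17}(\log p_{j})=\int_{0}^{1}g(\alpha)f(\alpha)^{17}e(-n\alpha)\,d\alpha,
\]
so it suffices to prove $\mathcal{R}(n)>0$ for every sufficiently large even $n$. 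Fixing a parameter $A>0$, I would dissect $[0,1)$ via Farey fractions into major arcs $\mathfrak{M}$, consisting of the intervals $|\alpha-a/q|\le \mathcal{L}^{A}n^{-1}$ with $1\le a\le q\le \mathcal{L}^{A}$ and $(a,q)=1$, together with the complementary minor arcs $\mathfrak{m}=[0,1)\setminus\mathfrak{M}$.

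On the major arcs I would replace $f$ and $g$ by their expected approximations, using Vaughan's identity together with the Siegel--Walfisz theorem to evaluate each sum near $a/q$ in terms of the complete exponential sums that build up the singular series. The standard manipulation then yields
\[
\int_{\mathfrak{M}}g(\alpha)f(\alpha)^{17}e(-n\alpha)\,d\alpha=\frac{\Gamma(\tfrac{3}{2})\Gamma^{17}(\tfrac{6}{5})}{\Gamma(\tfrac{39}{10})}\,\mathfrak{S}(n)\,n^{29/10}\bigl(1+o(1)\bigr),
\]
where $\mathfrak{S}(n)$ is the singular series attached to the present prime problem and $n^{29/10}$ comes from the singular integral. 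The crucial point is to show $\mathfrak{S}(n)\gg 1$ for even $n$: writing $\mathfrak{S}(n)=\prod_{p}\sigma_{p}(n)$ as an absolutely convergent Euler product, one checks local solubility at every prime. The parity condition is forced by the arithmetic of the archetypal congruence, since one square and seventeen fifth powers of odd primes sum to an even number, which is exactly why $n\equiv0\pmod2$ appears; a direct analysis of the $p$-adic densities then gives $\sigma_{p}(n)>0$ for all $p$ and hence $\mathfrak{S}(n)\gg1$.

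The heart of the matter is to show $\int_{\mathfrak{m}}\bigl|g(\alpha)f(\alpha)^{17}\bigr|\,d\alpha=o\bigl(n^{29/10}\bigr)$, which I would obtain by pairing a pointwise Weyl-type bound with a mean-value estimate. First, a Vaughan-identity decomposition of $f$ followed by Weyl differencing (or, more efficiently, Vinogradov's method and Wooley's efficient congruencing) yields $\sup_{\alpha\in\mathfrak{m}}|f(\alpha)|\ll P^{1-\sigma+\varepsilon}$ for some fixed $\sigma>0$, valid because on $\mathfrak{m}$ every $\alpha$ admits a rational approximation with $\mathcal{L}^{A}\le q\le P^{5}\mathcal{L}^{-A}$. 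Second, I would invoke a mean value of diagonal strength, $\int_{0}^{1}|f(\alpha)|^{t}\,d\alpha\ll P^{t-5+\varepsilon}$, for a suitable even $t<17$ (such moments for fifth powers being furnished by Hua's inequality and Vinogradov's mean value theorem, the same circle of estimates underlying $G(5)\le17$, and transferred from integers to primes by majorising prime solution counts by integer ones up to powers of $\mathcal{L}$). Bounding $g$ trivially by $g(0)\ll n^{1/2}$ and combining the two inputs gives
\[
\int_{\mathfrak{m}}\bigl|g f^{17}\bigr|\,d\alpha\ll n^{1/2}\Bigl(\sup_{\mathfrak{m}}|f|\Bigr)^{17-t}\int_{0}^{1}|f|^{t}\,d\alpha\ll n^{1/2}P^{12-\sigma(17-t)+\varepsilon}=n^{29/10-\sigma(17-t)/5+\varepsilon},
\]
which is $o(n^{29/10})$ as soon as $t<17$. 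Together with the major-arc formula this yields $\mathcal{R}(n)\gg n^{29/10}>0$, and the theorem follows.

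The main obstacle is precisely this minor-arc bound: degree five is high enough that crude Weyl differencing gives only the weak saving $\sigma=2^{-4}$, so the budget is tight, and the decisive feature is that the single square of a prime supplies the factor $n^{1/2}$ which—together with exactly seventeen fifth powers—makes the exponent $12-\sigma(17-t)$ fall strictly below $12$ while keeping the required moment $t<17$ within reach. Securing a pointwise saving $\sigma$ and a moment $t<17$ that are simultaneously admissible for the prime sum $f$, rather than for the easier integer analogue, is where the real work lies, as is controlling the major-arc approximation uniformly over $q\le\mathcal{L}^{A}$.
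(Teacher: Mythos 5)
Your minor-arc treatment is where the argument breaks down, and the failure is structural rather than merely technical. By bounding $g$ trivially by $g(0)\ll n^{1/2}$ you discard the square entirely and reduce the problem to showing $\int_{\mathfrak{m}}|f(\alpha)|^{17}\,d\alpha=o\bigl(P^{12}\bigr)$, i.e.\ to the minor-arc estimate underlying the asymptotic formula for seventeen fifth powers --- exactly the borderline case $s=2^{k-1}+1$ which the introduction of this paper records as open (the state of the art for pure fifth powers is $s\geqslant 2^{k-1}+2=18$), and you are demanding it with prime variables. Concretely, your two inputs cannot be made simultaneously admissible: the diagonal solutions alone force $\int_0^1|f(\alpha)|^{t}\,d\alpha\gg P^{t/2}$, so a mean value of ``diagonal strength'' $P^{t-5+\varepsilon}$ can only hold for $t\geqslant 10$; but for even $10\leqslant t\leqslant 16$ that bound is the unproven main conjecture for fifth powers. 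What is actually available, Hua's inequality, gives only $\int_0^1|f(\alpha)|^{16}\,d\alpha\ll P^{12+\varepsilon}$ (diagonal strength first appears at the $32$nd moment), and inserting this into your computation produces the exponent $12+(1-\sigma)+\varepsilon$, which would require the impossible saving $\sigma>1$. Vinogradov's mean value theorem and efficient congruencing concern the full Vinogradov system and do not deliver the single-equation sixteenth moment at $P^{11+\varepsilon}$. The whole point of the Br\"udern--Kawada analysis you cite is that the quadratic generating function must participate non-trivially in the minor-arc mean value; once it is frozen at $n^{1/2}$, the seventeen fifth powers cannot carry the argument.

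For contrast, the paper never runs the circle method on the eighteen-variable equation at all. It starts from Chen's exceptional-set bound $E_{5,15}(N)\ll N^{1-\frac{1}{10}-\frac{27}{3200}+\varepsilon}$ for sums of fifteen fifth powers of primes, and then applies the Kawada--Wooley exceptional-set machinery three times: adjoining the prime square (Lemma \ref{KW-2.2}) multiplies the exceptional-set bound by $N^{-1/2+\varepsilon}$, and adjoining each of two further prime fifth powers (Lemma \ref{A-kth-powers} with $k=5$) multiplies it by essentially $N^{-1/5+\varepsilon}$, after which the exceptional set for the full representation \eqref{conjecture-11} is $O(1)$ and hence empty for large even $n$. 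All the hard analytic work is thus outsourced to results that are already in the literature, which is precisely what makes the theorem accessible; a frontal circle-method attack along the lines you propose would require mean-value estimates for fifth powers that are currently out of reach.
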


\begin{notation}
Throughout this paper, let $p$, with or without subscripts, always denote a prime number; $\varepsilon$  always denotes a sufficiently small positive constant, which may not be the same at different occurrences. As usual, we use $f(x)\ll g(x)$ to denote $f(x)=O(g(x))$.
\end{notation}

\section{Preliminary and outline of the proof of Theorem \ref{Theorem}}
In order to better illustrate Lemma \ref{KW-2.2} and Lemma \ref{A-kth-powers} below, we first introduce some notations and definitions. When $\mathcal{C}\subseteq\mathbb{N}$, we write $\overline{\mathcal{C}}$ for the complement
$\mathbb{N}\setminus\mathcal{C}$ of $\mathcal{C}$ within $\mathbb{N}$. When $a$ and $b$ are non--negative integers, it is convenient to denote by $(\mathcal{C})_a^b$ the set $\mathcal{C}\cap(a,b]$, and by $|\mathcal{C}|_a^b$ the cardinality of $\mathcal{C}\cap(a,b]$. Next, when $\mathcal{C},\mathcal{D}\subseteq\mathbb{N}$, we define
\begin{equation*}
  \mathcal{C}+\mathcal{D}=\{c+d:c\in\mathcal{C}\,\,\,\textrm{and}\,\,\,d\in\mathcal{D}\}.
\end{equation*}
Also, we define $\Upsilon(\mathcal{C},\mathcal{D};N)$ to be the number of solutions of the equation
\begin{equation*}
  c_1-d_1=c_2-d_2,
\end{equation*}
with $c_1,c_2\in(\mathcal{C})_{2N}^{3N}$ and $d_1,d_2\in(\mathcal{D})_{0}^{N}$. It is convenient, when $k$ is a natural number, to describe a subset $\mathcal{Q}$ of $\mathbb{N}$ as being a \textit{high--density subset of the $k$--th powers} when (i) one has $\mathcal{Q}\subseteq\{n^k:n\in\mathbb{N}\}$, and (ii) for each positive number $\varepsilon$, whenever $N$ is a natural number sufficiently large in terms of $\varepsilon$, then
$|\mathcal{Q}|^N_0>N^{1/k-\varepsilon}$. Also, when $\theta>0$, we shall refer to a set $\mathcal{R}\subseteq\mathbb{N}$ as having \textit{complementary density growth exponent smaller than $\theta$} when there exists a positive number $\delta$ with the property that, for all sufficiently large natural numbers $N$, one has $|\overline{\mathcal{R}}|_0^N<N^{\theta-\delta}$.

When $q$ is a natural number and $\mathfrak{a}\in\{0,1,\dots,q-1\}$, we define $\mathcal{P}_\mathfrak{a}=\mathcal{P}_{\mathfrak{a},q}$ by
\begin{equation*}
\mathcal{P}_{\mathfrak{a},q}=\{\mathfrak{a}+mq:m\in\mathbb{Z}\}.
\end{equation*}
Also, we describe a set $\mathcal{L}$ as being a \textit{union of arithmetic progressions modulo $q$} when, for some
subset $\mathfrak{L}$ of $\{0,1,\dots,q-1\}$, one has
\begin{equation*}
\mathcal{L}=\bigcup_{\mathfrak{l}\in\mathfrak{L}}\mathcal{P}_{\mathfrak{l},q}.
\end{equation*}
In such circumstances, given a subset $\mathcal{C}$ of $\mathbb{N}$ and integers $a$ and $b$, it is convenient to write
\begin{equation*}
\langle\mathcal{C}\wedge\mathcal{L}\rangle_a^b =\min_{\mathfrak{l}\in\mathfrak{L}}\big|\mathcal{C}\cap\mathcal{P}_{\mathfrak{l},q}\big|_a^b.
\end{equation*}
Let $\mathcal{L}$ be a union of arithmetic progressions modulo $q$, for some natural number $q$. When $k$ is a natural number, we describe a subset $\mathcal{Q}$ of $\mathbb{N}$ as being a \textit{high--density subset of the $k$--th powers relative to $\mathcal{L}$} when (i) one has $\mathcal{Q}\subseteq\{n^k:n\in\mathbb{N}\}$, and (ii) for each positive number $\varepsilon$, whenever $N$ is a natural number sufficiently large in terms of $\varepsilon$, then $\langle\mathcal{Q}\wedge\mathcal{L}\rangle_0^N\gg_qN^{1/k-\varepsilon}$. In addition, when $\theta>0$, we shall refer to a set $\mathcal{R}\subseteq\mathbb{N}$ as having \textit{$\mathcal{L}$--complementary density growth exponent smaller than $\theta$} when there exists a positive number $\delta$ with the property that, for all sufficiently large natural numbers $N$,
one has $|\overline{\mathcal{R}}\cap\mathcal{L}|_0^N<N^{\theta-\delta}$.

\begin{lemma}\label{KW-2.2}
Let $\mathcal{L}$, $\mathcal{M}$ and $\mathcal{N}$ be unions of arithmetic progressions modulo $q$, for some
natural number $q$, and suppose that $\mathcal{N}\subseteq\mathcal{L}+\mathcal{M}$. Suppose also that $\mathcal{S}$ is a high--density subset of the squares relative to $\mathcal{L}$, and that $\mathcal{A}\subseteq\mathbb{N}$ has
$\mathcal{M}$--complementary density growth exponent smaller than $1$. Then, whenever $\varepsilon>0$ and $N$ is a natural number sufficiently large in terms of $\varepsilon$, one has
\begin{equation*}
   \big|\overline{\mathcal{A}+\mathcal{S}}\cap\mathcal{N}\big|_{2N}^{3N}\ll_q
   N^{-\frac{1}{2}+\varepsilon}\big|\overline{\mathcal{A}}\cap\mathcal{M}\big|_{N}^{3N}.
\end{equation*}
\end{lemma}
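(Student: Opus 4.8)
The plan is to control the exceptional set $\mathcal{Z}=\overline{\mathcal{A}+\mathcal{S}}\cap\mathcal{N}\cap(2N,3N]$ by a weighted double count sharpened with Cauchy's inequality. Put $\mathcal{B}=\overline{\mathcal{A}}\cap\mathcal{M}\cap(N,3N]$, so that $|\mathcal{B}|\leqslant|\overline{\mathcal{A}}\cap\mathcal{M}|_N^{3N}=:B$. First I would show that each $n\in\mathcal{Z}$ spawns many members of $\mathcal{B}$. Since $\mathcal{N}\subseteq\mathcal{L}+\mathcal{M}$, fix $\mathfrak{l}\in\mathfrak{L}$ and $\mathfrak{m}\in\mathfrak{M}$ with $n\equiv\mathfrak{l}+\mathfrak{m}\pmod q$. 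For any square $s\in\mathcal{S}\cap\mathcal{P}_{\mathfrak{l},q}$ with $0<s\leqslant N$, the integer $a:=n-s$ lies in $(N,3N]$, satisfies $a\equiv\mathfrak{m}\pmod q$ whence $a\in\mathcal{M}$, and obeys $a\notin\mathcal{A}$ because $s\in\mathcal{S}$ and $n\notin\mathcal{A}+\mathcal{S}$; thus $a\in\mathcal{B}$. As $\mathcal{S}$ is a high--density subset of the squares relative to $\mathcal{L}$, there are $\geqslant\langle\mathcal{S}\wedge\mathcal{L}\rangle_0^N\gg_qN^{1/2-\varepsilon}$ such $s$. Writing $w(a)=|\{s\in\mathcal{S}\cap(0,N]:a+s\in\mathcal{Z}\}|$ for the square--weight attached to $a$, this yields the lower bound
\[
\sum_{a\in\mathcal{B}}w(a)=\big|\{(n,s):n\in\mathcal{Z},\,s\in\mathcal{S}\cap(0,N],\,n-s\in\mathcal{B}\}\big|\gg_q|\mathcal{Z}|N^{1/2-\varepsilon}.
\]

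The crucial second step is to apply Cauchy's inequality summing over $a\in\mathcal{B}$ rather than over $n\in\mathcal{Z}$:
\[
\Big(\sum_{a\in\mathcal{B}}w(a)\Big)^2\leqslant B\sum_{a\in\mathcal{B}}w(a)^2\leqslant B\,\Upsilon(\mathcal{Z},\mathcal{T};N),
\]
where $\mathcal{T}=\{m^2:m\in\mathbb{N}\}$. The last inequality follows on expanding $w(a)^2$ and writing $n_i=a+s_i$: the quantity $\sum_{a\in\mathcal{B}}w(a)^2$ counts quadruples $(n_1,n_2,s_1,s_2)$ with $n_1-s_1=n_2-s_2$, $n_1,n_2\in\mathcal{Z}\subseteq(2N,3N]$ and $s_1,s_2\in\mathcal{T}\cap(0,N]$, which is at most $\Upsilon(\mathcal{Z},\mathcal{T};N)$ (the constraint $n_i-s_i\in\mathcal{B}$ being discarded).

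Third I would estimate $\Upsilon(\mathcal{Z},\mathcal{T};N)$ elementarily. The diagonal contribution $n_1=n_2$ (forcing $s_1=s_2$) is $\ll|\mathcal{Z}|N^{1/2}$, since there are $\ll N^{1/2}$ squares in $(0,N]$; for the off--diagonal, each nonzero $d=n_1-n_2=s_1-s_2$ has $\ll N^\varepsilon$ representations as a difference of two squares not exceeding $N$ by the divisor bound, contributing $\ll N^\varepsilon|\mathcal{Z}|^2$. Hence $\Upsilon(\mathcal{Z},\mathcal{T};N)\ll|\mathcal{Z}|N^{1/2}+N^\varepsilon|\mathcal{Z}|^2$. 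Feeding this and the lower bound into the displayed inequality gives
\[
|\mathcal{Z}|^2N^{1-2\varepsilon}\ll_qB\big(|\mathcal{Z}|N^{1/2}+N^\varepsilon|\mathcal{Z}|^2\big).
\]
Dividing by $|\mathcal{Z}|$ (the case $\mathcal{Z}=\varnothing$ being trivial) and invoking that $\mathcal{A}$ has $\mathcal{M}$--complementary density growth exponent smaller than $1$, so that $B\leqslant|\overline{\mathcal{A}}\cap\mathcal{M}|_0^{3N}\ll N^{1-\delta}$ for some $\delta>0$, the term $N^\varepsilon B|\mathcal{Z}|$ becomes negligible against $N^{1-2\varepsilon}|\mathcal{Z}|$ once $\varepsilon<\delta/3$, and may be absorbed into the left. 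This leaves $|\mathcal{Z}|N^{1-2\varepsilon}\ll_qBN^{1/2}$, i.e. $|\mathcal{Z}|\ll_qN^{-1/2+2\varepsilon}B$; reducing to small $\varepsilon$ and relabelling gives the claim.

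The main obstacle---and the point on which the whole argument turns---is the ordering of Cauchy's inequality. Summing over $a\in\mathcal{B}$ channels the loss into the additive energy of the \emph{exceptional} set $\mathcal{Z}$, whose off--diagonal part is $\ll N^\varepsilon|\mathcal{Z}|^2$ and can thus be transferred to the left--hand side and absorbed, using nothing more than the crude bound $B\ll N^{1-\delta}$. The opposite ordering, summing over $n\in\mathcal{Z}$, would instead produce the energy of $\mathcal{B}$, with off--diagonal term $\ll N^\varepsilon B^2$; this cannot be absorbed when $\delta\leqslant\tfrac12$, and the resulting bound falls short of $N^{-1/2+\varepsilon}B$ by a factor $N^{1/2}$. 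Making the square--difference energy estimate cooperate with this absorption is the only genuinely delicate part; the remainder is bookkeeping with residues modulo $q$ and the divisor bound.
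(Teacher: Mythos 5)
Your argument is correct. Note that the paper does not actually prove this lemma---it simply cites Theorem 2.2 of Kawada and Wooley---so you have supplied a genuine proof where the paper offers only a reference. Your route is essentially the one Kawada and Wooley take, and it parallels the paper's own proof of Lemma \ref{A-kth-powers} (the $k\geqslant 3$ analogue): your first two steps (the lower bound $\gg_q |\mathcal{Z}|N^{1/2-\varepsilon}$ for the weighted double count via the congruence bookkeeping $n-s\equiv\mathfrak{m}\pmod q$, followed by Cauchy over $a\in\mathcal{B}$ to bring in $\Upsilon$) amount to a self-contained proof of the special case of Lemma \ref{KW-2.1} that is needed here. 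The genuine divergence is in the treatment of $\Upsilon$: for $k\geqslant 3$ the paper must invoke a mean value estimate for exponential sums (Lemma 6.1 of Kawada--Wooley), which is what produces the extra term $N^{-3/k+\varepsilon}\big(|\overline{\mathcal{A}}\cap\mathcal{M}|_N^{3N}\big)^2$ in Lemma \ref{A-kth-powers}; for squares you instead count solutions of $x^2-y^2=d$ by the divisor bound, giving $\Upsilon\ll |\mathcal{Z}|N^{1/2}+N^{\varepsilon}|\mathcal{Z}|^2$, and---as you correctly emphasize---the off-diagonal term then involves $|\mathcal{Z}|^2$ rather than $B^2$, so it can be absorbed into the left-hand side using only $B\ll N^{1-\delta}$ and $\varepsilon<\delta/3$. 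That is precisely why the square case yields the clean single-term bound $N^{-1/2+\varepsilon}B$. All the individual steps (the deduction $n-s\in\mathcal{B}$ from $n\notin\mathcal{A}+\mathcal{S}$, the direction of Cauchy, the diagonal/off-diagonal split, and the final reduction to small $\varepsilon$) check out.
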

\begin{proof}
  See Theorem 2.2 of Kawada and Wooley \cite{Kawada-Wooley-2010}. $\hfill$
\end{proof}

\begin{lemma}\label{KW-2.1}
Suppose that $\mathcal{A},\mathcal{B}\subseteq\mathbb{N}$. In addition, let $\mathcal{L}$, $\mathcal{M}$ and $\mathcal{N}$ be unions of arithmetic progressions modulo $q$, for some natural number $q$, and suppose that $\mathcal{N}\subseteq\mathcal{L}+\mathcal{M}$. Then, for each natural number $N$, one has
\begin{equation*}
  \Big(\langle\mathcal{B}\wedge\mathcal{L}\rangle_0^N\cdot\big|\overline{\mathcal{A}+\mathcal{B}}\cap\mathcal{N}
  \big|_{2N}^{3N}\Big)^2\leqslant q\big|\overline{\mathcal{A}}\cap\mathcal{M}\big|_{N}^{3N}\cdot
  \Upsilon\big(\overline{\mathcal{A}+\mathcal{B}}\cap\mathcal{N},\mathcal{B}\cap\mathcal{L};N\big).
\end{equation*}
\end{lemma}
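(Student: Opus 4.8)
The plan is to run a Cauchy--Schwarz (large--sieve type) argument that converts the exceptional count $\big|\overline{\mathcal{A}+\mathcal{B}}\cap\mathcal{N}\big|_{2N}^{3N}$ into a correlation count of the shape $\Upsilon$. Write $\mathfrak{L},\mathfrak{M},\mathfrak{N}\subseteq\{0,1,\dots,q-1\}$ for the residue sets defining $\mathcal{L},\mathcal{M},\mathcal{N}$. First I would record the purely residue--theoretic consequence of the hypothesis $\mathcal{N}\subseteq\mathcal{L}+\mathcal{M}$: since each $\mathcal{P}_{\mathfrak{n},q}$ is a full residue class, this inclusion forces $\mathfrak{N}\subseteq\mathfrak{L}+\mathfrak{M}\pmod q$. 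Hence to every $\mathfrak{n}\in\mathfrak{N}$ I may attach, once and for all, a fixed decomposition $\mathfrak{n}\equiv\mathfrak{l}(\mathfrak{n})+\mathfrak{m}(\mathfrak{n})\pmod q$ with $\mathfrak{l}(\mathfrak{n})\in\mathfrak{L}$ and $\mathfrak{m}(\mathfrak{n})\in\mathfrak{M}$.

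The combinatorial heart is the following observation. Abbreviate $\mathcal{C}=\overline{\mathcal{A}+\mathcal{B}}\cap\mathcal{N}$. For $c\in(\mathcal{C})_{2N}^{3N}$ set $\mathfrak{l}_c=\mathfrak{l}(c\bmod q)\in\mathfrak{L}$, and take any $d\in\mathcal{B}\cap\mathcal{P}_{\mathfrak{l}_c,q}$ with $0<d\leqslant N$. I would then check two membership facts. First, since $c\notin\mathcal{A}+\mathcal{B}$ while $d\in\mathcal{B}$, one cannot have $c-d\in\mathcal{A}$, so $c-d\in\overline{\mathcal{A}}$. Second, modulo $q$ one has $c-d\equiv(c\bmod q)-\mathfrak{l}_c\equiv\mathfrak{m}(c\bmod q)\in\mathfrak{M}$, whence $c-d\in\mathcal{M}$. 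Together with $2N<c\leqslant3N$ and $0<d\leqslant N$, giving $N<c-d<3N$, this shows that $c-d\in\overline{\mathcal{A}}\cap\mathcal{M}$ with $N<c-d\leqslant3N$.

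Now I would count in two ways. Let $T$ be the number of pairs $(c,d)$ with $c\in(\mathcal{C})_{2N}^{3N}$ and $d\in\mathcal{B}\cap\mathcal{P}_{\mathfrak{l}_c,q}$, $0<d\leqslant N$. Since $\mathfrak{l}_c\in\mathfrak{L}$, each admissible $c$ offers at least $\min_{\mathfrak{l}\in\mathfrak{L}}\big|\mathcal{B}\cap\mathcal{P}_{\mathfrak{l},q}\big|_0^N=\langle\mathcal{B}\wedge\mathcal{L}\rangle_0^N$ choices of $d$, so $T\geqslant\langle\mathcal{B}\wedge\mathcal{L}\rangle_0^N\cdot\big|\mathcal{C}\big|_{2N}^{3N}$. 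Grouping the pairs by the value $e=c-d$ and writing $r(e)$ for its number of representations in this form, the previous paragraph shows $r(e)=0$ unless $e\in\overline{\mathcal{A}}\cap\mathcal{M}$ with $N<e\leqslant3N$; hence there are at most $\big|\overline{\mathcal{A}}\cap\mathcal{M}\big|_{N}^{3N}$ values $e$ with $r(e)>0$. By Cauchy--Schwarz,
\[
T^2=\Big(\sum_e r(e)\Big)^2\leqslant\big|\overline{\mathcal{A}}\cap\mathcal{M}\big|_{N}^{3N}\sum_e r(e)^2.
\]
Finally $\sum_e r(e)^2$ counts quadruples $(c_1,d_1,c_2,d_2)$ with $c_1-d_1=c_2-d_2$, $c_i\in(\mathcal{C})_{2N}^{3N}$ and $d_i\in\mathcal{B}\cap\mathcal{P}_{\mathfrak{l}_{c_i},q}$; as $\mathcal{P}_{\mathfrak{l}_{c_i},q}\subseteq\mathcal{L}$, these $d_i$ lie in $\mathcal{B}\cap\mathcal{L}$, so every such quadruple is among those counted by $\Upsilon(\mathcal{C},\mathcal{B}\cap\mathcal{L};N)$. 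Thus $\sum_e r(e)^2\leqslant\Upsilon\big(\overline{\mathcal{A}+\mathcal{B}}\cap\mathcal{N},\mathcal{B}\cap\mathcal{L};N\big)$, and combining the three estimates yields the claim, the factor $q\geqslant1$ in the statement leaving room to spare.

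The step I expect to require the most care is the residue bookkeeping: one must fix the map $\mathfrak{n}\mapsto(\mathfrak{l}(\mathfrak{n}),\mathfrak{m}(\mathfrak{n}))$ before the counting begins, so that $\mathfrak{l}_c$ depends only on $c\bmod q$, and then verify that this single choice simultaneously delivers $c-d\in\mathcal{M}$ and keeps each resulting $d$ inside the class $\mathcal{L}$ demanded by $\Upsilon$. Once that coherence is secured, the remainder is a clean two--variable Cauchy--Schwarz.
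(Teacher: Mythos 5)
Your argument is correct and complete: fixing a residue decomposition $\mathfrak{n}\equiv\mathfrak{l}(\mathfrak{n})+\mathfrak{m}(\mathfrak{n})\pmod q$, double counting the pairs $(c,d)$, and applying Cauchy--Schwarz over the at most $\big|\overline{\mathcal{A}}\cap\mathcal{M}\big|_N^{3N}$ possible values of $e=c-d$ is exactly the mechanism behind the cited result of Kawada and Wooley, which the paper invokes without proof. Your version in fact delivers the inequality without the factor $q$ (which, as you note, only provides slack), so there is nothing to repair.
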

\begin{proof}
  See Theorem 2.1 of Kawada and Wooley \cite{Kawada-Wooley-2010}. $\hfill$
\end{proof}

\begin{lemma}\label{A-kth-powers}
Let $\mathcal{L}$, $\mathcal{M}$ and $\mathcal{N}$ be unions of arithmetic progressions modulo $q$, for some
natural number $q$, and suppose that $\mathcal{N}\subseteq\mathcal{L}+\mathcal{M}$. Suppose also that, for $k\geqslant3$, $\mathcal{K}$ is a high--density subset of the $k$--th powers relative to $\mathcal{L}$, and that $\mathcal{A}\subseteq\mathbb{N}$ has $\mathcal{M}$--complementary density growth exponent smaller than $\theta$, for some positive number $\theta$. Then, whenever $\varepsilon>0$ and $N$ is a  natural number sufficiently large in terms of $\varepsilon$, without any condition on $\theta$, one has
\begin{equation*}
   \big|\overline{\mathcal{A}+\mathcal{K}}\cap\mathcal{N}\big|_{2N}^{3N}\ll_q
   N^{-\frac{1}{k}+\varepsilon}\big|\overline{\mathcal{A}}\cap\mathcal{M}\big|_{N}^{3N}
   +N^{-\frac{3}{k}+\varepsilon}\Big(\big|\overline{\mathcal{A}}\cap\mathcal{M}\big|_{N}^{3N}\Big)^2.
\end{equation*}
\end{lemma}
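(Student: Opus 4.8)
The plan is to deduce the estimate from the Cauchy--Schwarz inequality recorded in Lemma \ref{KW-2.1}, applied with $\mathcal{B}=\mathcal{K}$, and then to control the resulting additive energy $\Upsilon$ by an elementary divisor estimate for differences of $k$-th powers. Write $W=\big|\overline{\mathcal{A}+\mathcal{K}}\cap\mathcal{N}\big|_{2N}^{3N}$ and $Z=\big|\overline{\mathcal{A}}\cap\mathcal{M}\big|_{N}^{3N}$ for brevity. Since $\mathcal{K}$ is a high--density subset of the $k$-th powers relative to $\mathcal{L}$, one has $\langle\mathcal{K}\wedge\mathcal{L}\rangle_0^N\gg_q N^{1/k-\varepsilon}$, so that Lemma \ref{KW-2.1} with $\mathcal{B}=\mathcal{K}$ delivers
\begin{equation*}
  N^{2/k-2\varepsilon}W^2\ll_q Z\cdot\Upsilon\big(\overline{\mathcal{A}+\mathcal{K}}\cap\mathcal{N},\mathcal{K}\cap\mathcal{L};N\big).
\end{equation*}
Everything then rests on a sufficiently sharp upper bound for $\Upsilon$.

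The next step is to estimate $\Upsilon$. In the defining equation $c_1-d_1=c_2-d_2$ one has $c_1-c_2=d_1-d_2$, where $d_1,d_2$ range over $k$-th powers not exceeding $N$. The diagonal contribution $d_1=d_2$ (whence $c_1=c_2$) is exactly $W\cdot|\mathcal{K}\cap\mathcal{L}|_0^N\le N^{1/k}W$. For the off--diagonal part $d_1\ne d_2$ I would invoke the elementary fact that, for any fixed nonzero $h$, the number of representations $h=x^k-y^k$ with $x^k,y^k\le N$ is $O(N^\varepsilon)$; this follows from $x-y\mid x^k-y^k$ together with the divisor bound, since then $x-y$ runs over the $O(N^\varepsilon)$ divisors of $h$ and $x$ is thereafter determined up to $O(1)$ choices. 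Summing over the common difference $h=c_1-c_2$ yields the two complementary estimates
\begin{equation*}
  \Upsilon\ll N^{1/k}W+N^{\varepsilon}W^2
  \qquad\text{and}\qquad
  \Upsilon\ll N^{2/k}W,
\end{equation*}
the first obtained by bounding the number of pairs $(d_1,d_2)$ with $d_1-d_2=h$ by $N^\varepsilon$, the second by bounding the number of pairs $(c_1,c_2)$ with $c_1-c_2=h$ by $W$ and the number of admissible $h$ by $\big(|\mathcal{K}\cap\mathcal{L}|_0^N\big)^2\le N^{2/k}$.

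Inserting the first estimate into the displayed inequality gives $N^{2/k-2\varepsilon}W\ll_q N^{1/k}Z+N^{\varepsilon}ZW$. When $Z$ is small — precisely when $Z\ll N^{2/k-3\varepsilon}$, so that the term $N^{\varepsilon}ZW$ may be absorbed into the left-hand side — this produces the first term $W\ll_q N^{-1/k+\varepsilon}Z$. In the complementary range I would fall back on the second estimate for $\Upsilon$, which gives $W\ll_q N^{\varepsilon}Z$, a bound dominated by $N^{-3/k+\varepsilon}Z^2$ once $Z$ is sufficiently large; combining the two regimes is then intended to produce the asserted inequality, the two terms of the conclusion reflecting exactly the two ways of estimating the off--diagonal part of $\Upsilon$. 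The hard part will be this off--diagonal analysis: the divisor bound for differences of $k$-th powers is the crucial arithmetic input, and the delicate point is to reconcile the two estimates for $\Upsilon$ across the entire range of $Z$ so that precisely the exponents $-1/k$ and $-3/k$ emerge, the gain over the crude bound $W\ll N^{\varepsilon}Z$ having to be extracted in the intermediate range. It is exactly here that the absence of any hypothesis on $\theta$ — in contrast with the squares case of Lemma \ref{KW-2.2}, which requires complementary density growth exponent smaller than $1$ to stay in the first regime — makes the present argument genuinely more involved, and the appearance of the second term $N^{-3/k+\varepsilon}Z^2$ is the price paid for dispensing with that condition.
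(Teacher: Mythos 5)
Your opening step coincides with the paper's: both apply Lemma \ref{KW-2.1} with $\mathcal{B}=\mathcal{K}$ to reduce the problem to an upper bound for $\Upsilon\big(\overline{\mathcal{A}+\mathcal{K}}\cap\mathcal{N},\mathcal{K}\cap\mathcal{L};N\big)$, arriving at $N^{2/k-2\varepsilon}W^2\ll_q Z\cdot\Upsilon$. The gap is in the estimation of $\Upsilon$. Your two bounds $\Upsilon\ll N^{1/k}W+N^{\varepsilon}W^{2}$ (divisor bound) and $\Upsilon\ll N^{2/k}W$ (trivial) are both correct, but they do not suffice, and the ``delicate point'' you defer is precisely where the argument breaks. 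Concretely: the first bound yields $W\ll_q N^{-1/k+\varepsilon}Z$ only after absorbing $N^{\varepsilon}ZW$ into the left side, which requires $Z\ll N^{2/k-3\varepsilon}$; in the complementary range the second bound gives only $W\ll_q N^{2\varepsilon}Z$, and this is dominated by $N^{-3/k+\varepsilon}Z^{2}$ only when $Z\gg N^{3/k}$. For $N^{2/k}\ll Z\ll N^{3/k}$ neither regime applies: for instance, when $Z\asymp N^{5/(2k)}$ your bounds give $W\ll N^{5/(2k)+\varepsilon}$ while the lemma asserts $W\ll N^{2/k+\varepsilon}$. Even interpolating your two estimates by a geometric mean only gives $\Upsilon\ll N^{1/k}W+N^{1/k+\varepsilon}W^{3/2}$, which leads to $W\ll_q N^{-1/k+\varepsilon}Z+N^{-2/k+\varepsilon}Z^{2}$, i.e.\ exponent $-2/k$ where the lemma requires $-3/k$.

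The paper closes exactly this hole by importing Lemma 6.1 of Kawada and Wooley: with $f(\alpha)=\sum_{x\leqslant N^{1/k}}e(\alpha x^k)$ and $K(\alpha)=\sum_{n\in\mathcal{Z}(N)}e(n\alpha)$, where $\mathcal{Z}(N)=\big(\overline{\mathcal{A}+\mathcal{K}}\cap\mathcal{N}\big)_{2N}^{3N}$, one has
\begin{equation*}
\Upsilon\leqslant\int_0^1\big|f^2(\alpha)K^2(\alpha)\big|\,\mathrm{d}\alpha\ll N^{\frac{1}{k}}W+N^{\frac{1}{2k}+\varepsilon}W^{\frac{3}{2}},
\end{equation*}
with an off--diagonal term $N^{1/(2k)}W^{3/2}$ rather than $N^{\varepsilon}W^{2}$ or $N^{1/k}W^{3/2}$. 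This is a genuinely stronger mean--value estimate than the divisor bound for $x^k-y^k=h$ can deliver by itself, and it is what produces the exponent $-3/k$: substituting into the Cauchy--Schwarz inequality gives $W\ll_q N^{-1/k+\varepsilon}Z+N^{-3/(2k)+\varepsilon}ZW^{1/2}$, and when the second term dominates one solves for $W$ to get $W\ll_q N^{-3/k+\varepsilon}Z^{2}$. Without this input, or an equivalent replacement for your off--diagonal analysis, the proposed proof does not establish the lemma as stated.
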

\begin{proof}
  Let $N$ be a large natural number, write $\Theta=N^{\frac{1}{k}}$. Observe that since
$\mathcal{K}$ is a high--density subset of the $k$--th powers relative to $\mathcal{L}$, then
$\langle\mathcal{K}\wedge\mathcal{L}\rangle_0^N\gg N^{\frac{1}{k}-\varepsilon}$, and hence it follows from
Lemma \ref{KW-2.1} that
\begin{equation*}
  \Big(\langle\mathcal{K}\wedge\mathcal{L}\rangle_0^N\cdot\big|\overline{\mathcal{A}+\mathcal{K}}\cap\mathcal{N}
  \big|_{2N}^{3N}\Big)^2\ll_q\big|\overline{\mathcal{A}}\cap\mathcal{M}\big|_{N}^{3N}\cdot
  \Upsilon\big(\overline{\mathcal{A}+\mathcal{K}}\cap\mathcal{N},\mathcal{K}\cap\mathcal{L};N\big),
\end{equation*}
which implies that
\begin{equation}\label{A-kth-1}
 \Big(N^{\frac{1}{k}-\varepsilon}\cdot\big|\overline{\mathcal{A}+\mathcal{K}}\cap\mathcal{N}
  \big|_{2N}^{3N}\Big)^2\ll_q\big|\overline{\mathcal{A}}\cap\mathcal{M}\big|_{N}^{3N}\cdot
  \Upsilon\big(\overline{\mathcal{A}+\mathcal{K}}\cap\mathcal{N},\mathcal{K}\cap\mathcal{L};N\big).
\end{equation}
Trivially, the quantity $\Upsilon\big(\overline{\mathcal{A}+\mathcal{K}}\cap\mathcal{N},\mathcal{K}\cap\mathcal{L};N\big)$ is bounded above by the number of solutions of the equation
\begin{equation}\label{A-kth-2}
  n_1-n_2=x^k-y^k,
\end{equation}
with $n_1,n_2\in\big(\overline{\mathcal{A}+\mathcal{K}}\cap\mathcal{N}\big)_{2N}^{3N}$ and $1\leqslant x,y\leqslant\Theta$. Write $\mathcal{Z}(N)$ for $\big(\overline{\mathcal{A}+\mathcal{K}}\cap\mathcal{N}\big)_{2N}^{3N}$.
Also, define the exponential sums as follows
\begin{equation*}
  f(\alpha)=\sum_{1\leqslant x\leqslant \Theta}e(\alpha x^k) \qquad \textrm{and}\qquad K(\alpha)=\sum_{n\in\mathcal{Z}(N)}e(n\alpha).
\end{equation*}
Then, on considering the underlying Diophantine equation, it follows from (\ref{A-kth-2}) that
\begin{equation}\label{A-kth-3}
  \Upsilon\big(\overline{\mathcal{A}+\mathcal{K}}\cap\mathcal{N},\mathcal{K}\cap\mathcal{L};N\big)
  \leqslant\int_0^1\big|f^2(\alpha)K^2(\alpha)\big|\mathrm{d}\alpha.
\end{equation}
According to Lemma 6.1 of Kawada and Wooley \cite{Kawada-Wooley-2010} with $j=1$, we get
\begin{equation}\label{A-kth-4}
  \int_0^1\big|f^2(\alpha)K^2(\alpha)\big|\mathrm{d}\alpha\ll N^{\frac{1}{k}}
  \big|\overline{\mathcal{A}+\mathcal{K}}\cap\mathcal{N}\big|_{2N}^{3N}+N^{\frac{1}{2k}+\varepsilon}
  \Big(\big|\overline{\mathcal{A}+\mathcal{K}}\cap\mathcal{N}\big|_{2N}^{3N}\Big)^{\frac{3}{2}}.
\end{equation}
Combining (\ref{A-kth-1}), (\ref{A-kth-3}) and (\ref{A-kth-4}), we conclude that
\begin{equation*}
\big|\overline{\mathcal{A}+\mathcal{K}}\cap\mathcal{N}\big|_{2N}^{3N}
\ll_qN^{-\frac{1}{k}+\varepsilon}\big|\overline{\mathcal{A}}\cap\mathcal{M}\big|_{N}^{3N}
+N^{-\frac{3}{2k}+\varepsilon}\big|\overline{\mathcal{A}}\cap\mathcal{M}\big|_{N}^{3N}
\cdot\Big(\big|\overline{\mathcal{A}+\mathcal{K}}\cap\mathcal{N}\big|_{2N}^{3N}\Big)^{\frac{1}{2}},
\end{equation*}
which implies that
\begin{equation*}
\big|\overline{\mathcal{A}+\mathcal{K}}\cap\mathcal{N}\big|_{2N}^{3N}
\ll_qN^{-\frac{1}{k}+\varepsilon}\big|\overline{\mathcal{A}}\cap\mathcal{M}\big|_{N}^{3N}
+N^{-\frac{3}{k}+\varepsilon}\Big(\big|\overline{\mathcal{A}}\cap\mathcal{M}\big|_{N}^{3N}\Big)^2.
\end{equation*}
This completes the proof of Lemma \ref{A-kth-powers}. $\hfill$
\end{proof}

\begin{lemma}\label{exceptional-start}
Let $E_{5,s}(N)$ denote the number of positive integers $n$ up to $N$ subject to $n\equiv s\pmod 2$
for which $n=\sum_{k=1}^{s}p_k^5$ is not solvable in prime variables $p_1,\dots,p_s$ . Then, for any $\varepsilon>0$, there holds
\begin{equation*}
  E_{5,15}(N)\ll N^{1-\frac{1}{10}-\frac{27}{3200}+\varepsilon}.
\end{equation*}
\end{lemma}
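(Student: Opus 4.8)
The plan is to treat Lemma \ref{exceptional-start} by the Hardy--Littlewood circle method, converting the exceptional set into an $L^2$ mean value over the minor arcs via Bessel's inequality. After a dyadic decomposition it suffices to bound the number of exceptional $n\asymp N$. Put $P=(N/15)^{1/5}$ and
\[
f(\alpha)=\sum_{p\le P}(\log p)\,e(\alpha p^5),
\]
so that $r(n)=\int_0^1 f(\alpha)^{15}e(-n\alpha)\,\mathrm{d}\alpha$ is a weighted count of the representations $n=p_1^5+\dots+p_{15}^5$ (the constraint $\sum p_i^5\le N$ being automatic), and $r(n)>0$ certifies solubility. I would dissect $[0,1]$ into major arcs $\mathfrak{M}$ centred at rationals $a/q$ with $q\le N^{\delta}$ for a small fixed $\delta>0$, and minor arcs $\mathfrak{m}=[0,1]\setminus\mathfrak{M}$.

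On the major arcs the standard analysis yields $\int_{\mathfrak{M}}f^{15}e(-n\alpha)\,\mathrm{d}\alpha=\mathfrak{S}(n)\mathfrak{J}(n)+O(N^{2-\delta'})$, where the singular integral satisfies $\mathfrak{J}(n)\asymp N^2$ and $\mathfrak{S}(n)$ is the usual singular series. The only arithmetic subtlety is the lower bound $\mathfrak{S}(n)\gg1$: the primes imposing congruence conditions on fifth powers are those with $p\equiv1\pmod 5$, the first being $11$, modulo which the nonzero fifth powers are just $\{\pm1\}$; but a sum of fifteen terms drawn from $\{\pm1\}$ already runs through every residue class modulo $11$ (and similarly modulo $31,41,\dots$), so there is no local obstruction and $\mathfrak{S}(n)\gg1$ for all $n\equiv1\pmod2$. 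Consequently the major-arc contribution is $\gg N^2$ for every admissible $n$.

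The crux is the minor arcs. If $n$ lies in the exceptional set $Z$ then $r(n)=0$, so the minor-arc integral must cancel the main term, giving $\big|\int_{\mathfrak{m}}f^{15}e(-n\alpha)\,\mathrm{d}\alpha\big|\gg N^2$. Summing over $Z$ and invoking Bessel's inequality for the Fourier coefficients $\int_{\mathfrak{m}}f^{15}e(-n\alpha)\,\mathrm{d}\alpha$ yields
\[
N^4\,|Z|\ll\sum_{n}\Big|\int_{\mathfrak{m}}f^{15}e(-n\alpha)\,\mathrm{d}\alpha\Big|^2\le\int_{\mathfrak{m}}|f(\alpha)|^{30}\,\mathrm{d}\alpha.
\]
Since $\int_0^1|f|^{30}\,\mathrm{d}\alpha\asymp P^{25}\asymp N^5$, the full integral alone only returns the trivial $|Z|\ll N$; everything hinges on the power of $P$ saved on $\mathfrak{m}$. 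I would extract this saving by Hölder's inequality,
\[
\int_{\mathfrak{m}}|f|^{30}\,\mathrm{d}\alpha\le\Big(\sup_{\alpha\in\mathfrak{m}}|f(\alpha)|\Big)^{30-2s}\int_0^1|f(\alpha)|^{2s}\,\mathrm{d}\alpha,
\]
inserting a Weyl-type minor-arc bound $\sup_{\mathfrak{m}}|f|\ll P^{1-\sigma+\varepsilon}$ for the prime-weighted fifth-power sum together with a mean-value estimate for $\int_0^1|f|^{2s}\,\mathrm{d}\alpha$, and optimizing over $s$. By way of illustration, if $\int_0^1|f|^{2s}\,\mathrm{d}\alpha\ll P^{2s-5+\varepsilon}$ holds at some order $2s<30$, the split saves $(30-2s)\sigma$ powers of $P$, hence $(30-2s)\sigma/5$ in $N$; already $s=11$ with a Weyl saving $\sigma=\tfrac1{16}$ would formally give $\tfrac12$ in $P$ and so the benchmark $|Z|\ll N^{1-1/10+\varepsilon}$. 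Reaching the stated $N^{1-1/10-27/3200+\varepsilon}$ requires squeezing a little more out of this balance; the numerator $27=2^5-5$ is precisely the exponent Hua's inequality supplies for fifth powers, which points to a Hua-type (or Vinogradov-sharpened) mean value as the true input, with $27/3200$ emerging from its interplay with the Weyl saving.

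I expect the minor-arc estimate to be the decisive obstacle, in two interlocking parts. First, one must prove a genuine power-saving bound for $\sum_p(\log p)e(\alpha p^5)$ on $\mathfrak{m}$; for $k=5$ with prime arguments this is delicate, requiring Vaughan's identity to linearise the prime condition followed by Weyl differencing, with the attendant losses to be controlled. Second, and this is what fixes the constant $27/3200$, one needs the mean value $\int_0^1|f|^{2s}$ to reach (or nearly reach) its expected order $P^{2s-5+\varepsilon}$ at an order $2s$ small enough that the remaining $30-2s$ Weyl factors save strictly more than $\tfrac1{10}$. Beating the benchmark $9/10$ is exactly what is needed downstream: one square of a prime, added through Lemma \ref{KW-2.2}, saves a further $\tfrac12$, bringing the density below $\tfrac25$ and hence into the contracting regime of Lemma \ref{A-kth-powers}, through which two more fifth powers save $\tfrac25$ more; since $\tfrac12+\tfrac25=\tfrac9{10}$, this much density must already be in hand for the exceptional set of $p^2+p_1^5+\dots+p_{17}^5$ to be driven to zero.
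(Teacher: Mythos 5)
The paper does not actually prove this lemma: its ``proof'' is a one-line citation of Theorem 1.1 of Chen \cite{Chen-Gongrui-2022}, so your argument can only be measured against that external source. Your proposal lays out the standard circle-method skeleton --- Bessel's inequality over the exceptional set, a major-arc main term $\gg N^2$ with a routine check that the singular series has no local obstruction, and a minor-arc $30$th moment controlled by a sup bound times a mean value --- and that skeleton is indeed the right architecture. But there is a genuine gap exactly at the decisive step, and you concede it yourself: the exponent $\frac{27}{3200}$ is never derived. Two concrete problems. First, the illustrative mean value $\int_0^1|f(\alpha)|^{22}\,\mathrm{d}\alpha\ll P^{17+\varepsilon}$ is far beyond current knowledge --- an essentially optimal $22$nd moment for fifth powers is not known even for unrestricted integer variables (it would come close to giving $G(5)\le 11$); the available inputs are Hua-type bounds such as $\int_0^1|f|^{32}\,\mathrm{d}\alpha\ll P^{27+\varepsilon}$ (whence the coincidence $27=2^5-5$ you noticed) and their refinements, which live at orders at or above $30$ and therefore leave essentially no Weyl factors to spare in a $30$th-moment computation. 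Second, for the prime-weighted sum $\sum_{p}(\log p)e(\alpha p^5)$ the minor-arc saving obtainable after Vaughan's identity is much smaller than the classical Weyl exponent $\sigma=\tfrac1{16}$ you plug in; the known exponents for prime fifth-power Weyl sums (Kumchev and successors) are the true bottleneck, so the numerology ``$s=11$, $\sigma=\tfrac1{16}$'' does not correspond to provable estimates, and neither the benchmark $N^{9/10}$ nor the extra $\frac{27}{3200}$ follows from what you have written.

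In short, your proposal correctly identifies why a bound strictly below $N^{9/10}$ is what the downstream argument needs (your accounting of the $\tfrac12$ from the square via Lemma \ref{KW-2.2} and the two further $\tfrac15$'s via Lemma \ref{A-kth-powers} is accurate), but it is a plan rather than a proof: the specific constant $\frac{27}{3200}$ is the output of a substantially more delicate optimization in Chen's paper, involving sharper minor-arc estimates for prime Weyl sums and carefully balanced mean values, not of the single H\"older split you describe. The honest options are either to reproduce Chen's argument in detail or to do what the paper does and cite Theorem 1.1 of \cite{Chen-Gongrui-2022}.
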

\begin{proof}
See Theorem 1.1 of Chen \cite{Chen-Gongrui-2022}.
\end{proof}

\section{Proof of Theorem \ref{Theorem}}

Let $E(N)$ denote the set of positive integers $n$ subject to $n\equiv0\pmod2$ up to $N$, which can not be represented as
\begin{equation}\label{conjecture-11}
   n=p^2+p_1^5+p_2^5+\cdots+p_{17}^5.
\end{equation}
The remaining part of this section is devoted to establishing Theorem \ref{Theorem} by using Lemma \ref{KW-2.2}, Lemma \ref{A-kth-powers} and Lemma \ref{exceptional-start}.

\noindent
\textit{Proof of Theorem \ref{Theorem}} \quad Define
\begin{equation*}
   \mathcal{A}_1=\Big\{ p_1^5+p_2^5+\cdots+p_{15}^5: \,\,\textrm{$p_j$'s are primes, $j=1,2,\dots,15$}\Big\},
\end{equation*}
\begin{equation*}
\mathcal{A}_2=\Big\{p^2+p_1^5+p_2^5+\cdots+p_{15}^5: \,\,\textrm{$p$ and $p_j$'s are primes, $j=1,2,\dots,15$}\Big\},
\end{equation*}
\begin{equation*}
\mathcal{A}_3=\Big\{p^2+p_1^5+p_2^5+\cdots+p_{16}^5: \,\,\textrm{$p$ and $p_j$'s are primes, $j=1,2,\dots,16$}\Big\},
\end{equation*}
\begin{equation*}
   \mathcal{M}=\mathcal{L}=\mathcal{N}_2=\big\{n\in\mathbb{N}^+:\,\, n\equiv1\!\!\!\!\!\pmod 2\big\},
\end{equation*}
\begin{equation*}
   \mathcal{N}_1=\mathcal{N}_3=\big\{n\in\mathbb{N}: \,\, n\equiv0\!\!\!\!\!\pmod 2\big\},\qquad
   \mathcal{K}_j=\big\{p^{\,j}:\, p \,\,\textrm{is a prime}\big\},
\end{equation*}
\begin{equation*}
   \mathscr{E}_1=\Big\{n\in\mathbb{N}:\,\, n\in\mathcal{M},\,\,
   n\not=p_1^5+p_2^5+\cdots+p_{15}^5,\,\,\textrm{$p_j$'s are primes} \Big\},
\end{equation*}
\begin{equation*}
   \mathscr{E}_2=\Big\{n\in\mathbb{N}:\,\, n\in\mathcal{N}_1,\,\, n\not=p^2+p_1^5+p_2^5+\cdots+p_{15}^5,\,\,
   \textrm{$p$ and $p_j$'s are primes} \Big\},
\end{equation*}
\begin{equation*}
   \mathscr{E}_3=\Big\{n\in\mathbb{N}:\,\, n\in\mathcal{N}_2,\,\,n\not=p^2+p_1^5+p_2^5+\cdots+p_{16}^5,\,\,
   \textrm{$p$ and $p_j$'s are primes} \Big\},
\end{equation*}
\begin{equation*}
   \mathscr{E}=\Big\{n\in\mathbb{N}:\,\, n\in\mathcal{N}_3,\,n\not=p^2+p_1^5+p_2^5+\cdots+p_{17}^5,\,\,
   \textrm{$p$ and $p_j$'s are primes}\Big \}.
\end{equation*}
Thus, one has $E(N)=\big|\mathscr{E}\big|_0^N$. Moreover, we denote $\big|\mathscr{E}_1\big|_0^N,\big|\mathscr{E}_2\big|_0^N$  and $\big|\mathscr{E}_3\big|_0^N$ by $E_1(N),E_2(N)$ and $E_3(N)$, respectively. Trivially, $\mathcal{L},\mathcal{M},\mathcal{N}_1,\mathcal{N}_2,\mathcal{N}_3$ are a union of arithmetic progression modulo $2$ subject to
\begin{equation*}
  \begin{cases}
   \mathcal{N}_1\subseteq\mathcal{L}+\mathcal{M}, \\
   \mathcal{N}_2\subseteq\mathcal{L}+\mathcal{N}_1, \\
   \mathcal{N}_3\subseteq\mathcal{L}+\mathcal{N}_2,
  \end{cases}
\end{equation*}
Moreover, it follows from the Prime Number Theorem in arithmetic progression that there hold
\begin{equation*}
  \langle\mathcal{K}_2\wedge\mathcal{L}\rangle_0^N\gg N^{\frac{1}{2}}(\log N)^{-1},\qquad
  \langle\mathcal{K}_5\wedge\mathcal{L}\rangle_0^N\gg N^{\frac{1}{5}}(\log N)^{-1}.
\end{equation*}
Therefore, $\mathcal{K}_2$ and $\mathcal{K}_5$ are two high--density subsets of the squares and fifth powers, respectively, relative to $\mathcal{L}$. By Lemma \ref{exceptional-start}, it is easy to see that
\begin{equation*}
  \big|\overline{\mathcal{A}_1}\cap\mathcal{M}\big|_0^N=\big|\mathscr{E}_1\big|_0^N=E_1(N)\ll N^{1-(\frac{1}{10}+\frac{27}{3200})+\varepsilon}.
\end{equation*}
Thus, $\mathcal{A}_1$ has $\mathcal{M}$ complementary density growth exponent smaller than $1$.
From Lemma \ref{KW-2.2}, we know that
\begin{align*}
             \big|\mathscr{E}_2\big|_{2N}^{3N}=\big|\overline{\mathcal{A}_1+\mathcal{K}_2}
             \cap\mathcal{N}_1\big|_{2N}^{3N}
  \ll & \,\, N^{-\frac{1}{2}+\varepsilon}\big|\overline{\mathcal{A}_1}\cap\mathcal{M}\big|_N^{3N}
                 \nonumber \\
  \ll & \,\, N^{-\frac{1}{2}+\varepsilon}\cdot E_1(3N)
             \ll N^{\frac{1}{2}-(\frac{1}{10}+\frac{27}{3200})+\varepsilon}.
\end{align*}
Let the integers $N_j$ for $j\geqslant0$ by means of the iterative formula
\begin{equation}\label{N_0-ita}
  N_0=\bigg\lceil\frac{1}{2}N\bigg\rceil,\qquad N_{j+1}=\bigg\lceil\frac{2}{3}N_j\bigg\rceil, \quad (j\geqslant0),
\end{equation}
where $\lceil N\rceil$ denotes the least integer not smaller than $N$. Moreover, we define $J$ to be the least positive integer with the property that $N_j\leqslant10$, then $J\ll\log N$. Therefore, there holds
\begin{equation}\label{E_2-upper-1}
  E_2(N)\leqslant10+\sum_{j=1}^J\big|\mathscr{E}_2\big|_{2N_j}^{3N_j}
         \ll N^{\frac{1}{2}-(\frac{1}{10}+\frac{27}{3200})+\varepsilon}.
\end{equation}
 By (\ref{E_2-upper-1}), we know that
\begin{equation*}
  \big|\overline{\mathcal{A}_2}\cap\mathcal{N}_1\big|_0^N=\big|\mathscr{E}_2\big|_0^N=E_2(N)\ll N^{\frac{1}{2}-(\frac{1}{10}+\frac{27}{3200})+\varepsilon}.
\end{equation*}
Hence, $\mathcal{A}_2$ has $\mathcal{N}_1$ complementary density growth exponent smaller than $\frac{1}{2}$. From
Lemma \ref{A-kth-powers} with $k=5$, we obtain
\begin{align*}
             \big|\mathscr{E}_3\big|_{2N}^{3N}=\big|\overline{\mathcal{A}_2+\mathcal{K}_5}
             \cap\mathcal{N}_2\big|_{2N}^{3N}
\ll & \,\, N^{-\frac{1}{5}+\varepsilon}\big|\overline{\mathcal{A}_2}\cap\mathcal{N}_1\big|_N^{3N}
          +N^{-\frac{3}{5}+\varepsilon}\Big(\big|\overline{\mathcal{A}_2}\cap\mathcal{N}_1\big|_N^{3N}\Big)^2
               \nonumber \\
\ll & \,\, N^{-\frac{1}{5}+\varepsilon}\cdot E_2(3N)+N^{-\frac{3}{5}+\varepsilon}\big(E_2(3N)\big)^2
                \nonumber \\
\ll & \,\, N^{-\frac{1}{5}+\varepsilon}\cdot N^{\frac{1}{2}-(\frac{1}{10}+\frac{27}{3200})+\varepsilon}
           +N^{-\frac{3}{5}+\varepsilon}\big(N^{\frac{1}{2}-(\frac{1}{10}+\frac{27}{3200})+\varepsilon}\big)^2
                \nonumber \\
\ll & \,\, N^{\frac{1}{5}-\frac{27}{3200}+\varepsilon}.
\end{align*}
By the same notation of (\ref{N_0-ita}), we derive that
\begin{equation}\label{E_3-upper-1}
  E_3(N)\leqslant10+\sum_{j=1}^J\big|\mathscr{E}_3\big|_{2N_j}^{3N_j}\ll N^{\frac{1}{5}-\frac{27}{3200}+\varepsilon}.
\end{equation}
It follows from (\ref{E_3-upper-1}) that
\begin{equation*}
  \big|\overline{\mathcal{A}_3}\cap\mathcal{N}_2\big|_0^N=\big|\mathscr{E}_3\big|_0^N=E_3(N)\ll N^{\frac{1}{5}-\frac{27}{3200}+\varepsilon}.
\end{equation*}
Consequently, $\mathcal{A}_3$ has $\mathcal{N}_2$ complementary density growth exponent smaller than $\frac{1}{5}$. By Lemma \ref{A-kth-powers} with $k=5$ again, we derive that
\begin{align*}
  \big|\mathscr{E}\big|_{2N}^{3N}=\big|\overline{\mathcal{A}_3+\mathcal{K}_5}\cap\mathcal{N}_3\big|_{2N}^{3N}
  \ll & \,\, N^{-\frac{1}{5}+\varepsilon}\big|\overline{\mathcal{A}_3}\cap\mathcal{N}_2\big|_N^{3N}
            +N^{-\frac{3}{5}+\varepsilon}\Big(\big|\overline{\mathcal{A}_3}\cap\mathcal{N}_2\big|_N^{3N}\Big)^2
                 \nonumber \\
  \ll & \,\, N^{-\frac{1}{5}+\varepsilon}\cdot E_3(3N)+N^{-\frac{3}{5}+\varepsilon}\big(E_3(3N)\big)^2
                \nonumber \\
  \ll & \,\, N^{-\frac{1}{5}+\varepsilon}\cdot N^{\frac{1}{5}-\frac{27}{3200}+\varepsilon}+
             N^{-\frac{3}{5}+\varepsilon}\cdot\big(N^{\frac{1}{5}-\frac{27}{3200}+\varepsilon}\big)^2
                \nonumber \\
  \ll & \,\, N^{-\frac{27}{3200}+\varepsilon}.
\end{align*}
At last, with the same notation of (\ref{N_0-ita}) again, we deduce that
\begin{equation*}
   E(N)\leqslant10+\sum_{j=1}^J\big|\mathscr{E}\big|_{2N_j}^{3N_j}\ll 1,
\end{equation*}
which implies that the exceptional set of the representation of $n$ satisfying necessary congruent condition,
i.e., $n\equiv0\pmod2$, as the sum of one square of prime and seventeen fifth powers of primes is bound.
This completes the proof of Theorem \ref{Theorem}.

\vskip 6mm

\section*{Acknowledgement}

 The authors would like to express the most sincere gratitude to the referee for his/her patience in refereeing this paper. This work is supported by the Natural Science Foundation of Beijing Municipal (Grant No. 1242003),
 and the National Natural Science Foundation of China (Grant Nos. 12001047, 11901566, 11971476, 12071238).

\end{document}